\newcommand{\R}{\mathbb{R}}
\newcommand{\C}{\mathbb{C}}
\newcommand{\N}{\mathbb{N}}
\newcommand{\supp}{\mbox{supp}\,}
\newcommand{\dist}{\mbox{dist}\,}
\newtheorem{theorem}{Theorem}[section]
\newtheorem{lemma}[theorem]{Lemma}
\newtheorem{corollary}[theorem]{Corollary}
\newtheorem{proposition}[theorem]{Proposition}
\theoremstyle{definition}
\newtheorem{remark}[theorem]{Remark}
\newtheorem{definition}[theorem]{Definition}
\newtheorem{example}[theorem]{Example}
\title[Topologizable and power bounded weighted composition operators]{Topologizable and power bounded weighted composition operators on spaces of distributions}
\author{Thomas Kalmes}
\address{Chemnitz University of Technology, Faculty of Mathematics, 09107 Chemnitz, Germany}
\email{thomas.kalmes@math.tu-chemnitz.de}
\begin{document}

\begin{abstract}
	We study topologizability and power boundedness of weigh\-ted composition operators on (certain subspaces of) $\mathscr{D}'(X)$ for an open subset $X$ of $\mathbb{R}^d$. For the former property we derive a characterization in terms of the symbol and the weight of the weighted composition operator, while for the latter property necessary and sufficient conditions on the weight and the symbol are presented. Moreover, for an unweighted composition operator a characterization of power boundedness in terms of the symbol is derived for the special case of a bijective symbol.\\
	
	\noindent Keywords: Weighted composition operator; Topologizable operator; Power bounded operator\\
	
	\noindent MSC 2020: 47B33, (47C05, 46F05)
\end{abstract}

\maketitle

\section{Introduction}

Recently, topologizability and power boundedness (see Definition \ref{def topologizability etc} below) of (weighted) composition operators on various spaces of functions have been studied by several authors, see e.g.\ \cite{BeGoJoJo16-2}, \cite{BoDo11-2}, \cite{GoJoJo16} \cite{Wolf12-1}, \cite{Wolf12-2}, \cite{Wolf15}. In \cite{Kalmes19-2}, a general approach within the framework of function spaces defined by local properties which are subspaces of continuous functions on a locally compact, $\sigma$-compact, non-compact Hausdorff space has been provided. By this general framework, many function spaces which appear in mathematical analysis are covered, and topologizability and power boundedness of weighted composition operators on such spaces are characterized in terms of the symbol and the weight of the operator. However, this general setting does not contain the space of distributions over an open subset of $\R^d$.

The objective of the present note is to characterize topologizability of weighted composition operators on spaces of distributions defined by local properties. Moreover, we investigate power boundedness in this setting as well, and characterize this property for unweighted composition operators on $\mathscr{D}'(X), X\subseteq\R^d$ open, in terms of the symbol for the special case of a bijective symbol.

While the interest for power boundedness of an operator stems from its close relationship to (uniform) mean ergodicity, topologizable operators were introduced by \.{Z}elazko in \cite{Zelazko07} (see also \cite{Bonet07}). For a Hausdorff locally convex space $E$, in order that the algebra $L(E)$ of all continuous endomorphisms of $E$ (with composition as multiplication) is topologizable, i.e.\ $L(E)$ admits a locally convex topology for which multiplication is jointly continuous, $E$ is necessarily subnormed. The latter means that there is a norm on $E$ such that the corresponding topology is finer than the locally convex topology initially given on $E$, see \cite{Zelazko02} and references therein. In case of a sequentially complete $E$ it has been shown in \cite{Zelazko02} that this necessary condition on $E$ is also sufficient for the topologizability of $L(E)$. Motivated by this, in \cite{Zelazko07} it was investigated when for a given continuous linear operator $T$ on a Hausdorff locally convex space $E$ there is a unital subalgebra $A$ of $L(E)$ which contains $T$ and which admits a locally convex topology making $A$ into a topological algebra such that additionally the map
$$A\times E\rightarrow E, (S,x)\mapsto Sx$$
is continuous. By \cite[Theorem 5]{Zelazko07} for a given $T\in L(E)$ there is such a subalgebra $A$ of $L(E)$ precisely when $T$ is topologizable.\\

Throughout, we use standard notation and terminology from functional analysis. For anything related to functional analysis which is not explained in the text we refer the reader to \cite{MeVo1997}. Moreover, we use common notation from the theory of distributions and linear partial differential operators. For this we refer the reader to \cite{HoermanderPDO1} and \cite{HoermanderPDO2}.

By an open, relatively compact exhaustion $(X_n)_{n\in\N}$ of a topological space $X$ we understand a sequence of open subsets of $X$ such that $\overline{X}_n\subseteq X_{n+1}$ with compact closure $\overline{X}_n$ for all $n\in\N$ such that $\cup_{n\in\N} X_n=X$.

\section{Weighted composition operators on spaces of distributions defined by local properties}\label{distributions defined by local properties}

As in \cite{Kalmes19-2} we are interested in topologizability of weighted composition operators - the precise definition of topologizability will be recalled below. However, in contrast to \cite{Kalmes19-2} where weighted composition operators were considered on spaces of functions, in the present paper we consider these operators on spaces of distributions defined by local properties. As a general framework we choose the notion of sheaves. In what follows we always assume that the space of compactly supported smooth functions $\mathscr{D}(X)$ on an open set $X\subseteq\R^d$ is equipped with its standard locally convex topology (see e.g.\ \cite[Chapter 6]{RudinFA} or \cite[Chapter 2.12]{Horvath}).

\begin{definition}\label{sheaf of distributions}
	From now on we assume that $\mathscr{G}$ is a \textit{sheaf of distributions on $\R^d$ defined by local properties}, i.e.\ we assume that the following properties hold.
	\begin{itemize}
		\item For every open subset $X\subseteq\R^d$, $\mathscr{G}(X)$ is a subspace of $\mathscr{D}'(X)$ equipped with the subspace topology. Here, as usual $\mathscr{D}'(X)$ is equipped with the strong (dual) topology on $\mathscr{D}'(X)$ with respect to the dual pair $(\mathscr{D}(X),\mathscr{D}'(X))$. Moreover, whenever $Y\subseteq\R^d$ is another open set with $Y\subseteq X$ we assume that the restriction mapping
		$$r_X^Y:\mathscr{G}(X)\rightarrow\mathscr{G}(Y),u\mapsto u_{|Y}$$
		is well defined. Here we use the common abbreviation $u_{|Y}:=u_{|\mathscr{D}(Y)}$ for $u\in\mathscr{D}'(X)$.
		
		\item (Localization) For an open set $X\subseteq\R^d$, for every open cover $(X_\iota)_{\iota\in I}$ of $X$, and for each $u,v\in\mathscr{G}(X)$ with $u_{|X_\iota}=v_{|X_\iota} (\iota\in I)$ we have $u=v$. (Note that this property always holds since $\mathscr{D}'$ is a sheaf!)
		
		\item (Gluing) For an open set $X\subseteq\R^d$, for every open cover $(X_\iota)_{\iota\in I}$ of $X$, and for all $(u_\iota)_\iota\in\prod_{\iota\in I}\mathscr{G}(X_\iota)$ with $u_{\iota|X_\iota\cap X_\kappa}=u_{\kappa|X_\iota\cap X_\kappa}\, (\iota,\kappa\in I)$ there is $u\in\mathscr{G}(X)$ with $u_{|X_\iota}=u_\iota\,(\iota\in I)$.
	\end{itemize}
	It follows from the above properties that for every open subset $X\subseteq\R^d$ and each open, relatively compact exhaustion $(X_n)_{n\in\N_0}$ of $X$ the space $\mathscr{G}(X)$ and the projective limit $\mbox{proj}_{\leftarrow n}(\mathscr{G}(X_n),r_{X_{n+1}}^{X_n})$ are algebraically isomorphic via the mapping
	$$\mathscr{G}(X)\rightarrow\mbox{proj}_{\leftarrow n}(\mathscr{G}(X_n),r_{X_{n+1}}^{X_n}),u\mapsto (r_X^{X_n}(u))_{n\in\N_0}=(u_{|X_n})_{n\in\N_0}.$$
	For a bounded subset $B\subseteq\mathscr{D}(X)$ it follows (see e.g.\ \cite[Theorem 6.5]{RudinFA} or \cite[Example 2.12.6]{Horvath}) that there is $n\in\N$ for which $B\subseteq\mathscr{D}(X_n)$ and that $B$ is bounded in $\mathscr{D}(X_n)$. From this we conclude that the above algebraic isomorphism between $\mathscr{G}(X)$ and $\mbox{proj}_{\leftarrow n}(\mathscr{G}(X_n),r_{X_{n+1}}^{X_n})$ is a topological isomorphism.
	
	For obvious reasons, $\mathscr{G}(X)$, $X\subseteq\R^d$ open, is called a \textit{space of distributions defined by local properties}.
\end{definition}

\begin{example}
	Obviously, we can choose $\mathscr{G}=\mathscr{D}'$. Moreover, for any polynomial $P\in\C[X_1,\ldots,X_d]$ we may consider $\mathscr{G}=\mathscr{D}'_P$, i.e.\ for every open $X\subseteq\R^d$
	$$\mathscr{D}'_P(X)=\left\{u\in\mathscr{D}'(X);\,P(\partial)u=0\right\}.$$
	Whenever $P$ is not hypoelliptic, $\mathscr{D}'_P(X)$ and $C_P^\infty(X)$ do not coincide, where $C_P^\infty(X)=\left\{f\in C^\infty(X);\,P(\partial)f=0\right\}$. Weighted composition operators on $C_P^\infty(X)$ have been studied in \cite[Section 4]{Kalmes19-2} and \cite[Section 6]{Kalmes19-3}.
\end{example}

\begin{definition}\label{def topologizability etc}
	For a locally convex space $E$ we denote by $cs(E)$ the set of continuous seminorms on $E$. Let $T$ be a continuous linear operator on $E$.
	\begin{enumerate}
		\item[i)] $T$ is said to be \textit{topologizable} if for every $p\in cs(E)$ there is $q\in cs(E)$ such that for all $m\in\N$ there is $\gamma_m>0$ with
		$$p(T^m(x))\leq \gamma_m q(x)\,\mbox{ for all }x\in E.$$
		\item[ii)] $T$ is said to be \textit{power bounded}
		if for every $p\in cs(E)$ there is $q\in cs(E)$ such that for all $m\in\N$
		$$p(T^m(x))\leq q(x)\,\mbox{ for all }x\in E,$$
		i.e.\ if the set of iterates $\left\{T^m;\,m\in\N_0\right\}$ of $T$ is equicontinuous.
	\end{enumerate}
\end{definition}

Clearly, every power bounded operator is topologizable. Moreover, $T$ is topologizable whenever there is a sequence $(\alpha_m)_{m\in\N}$ of strictly positive numbers such that the set $\left\{\alpha_m T^m;\,m\in\N\right\}$ is equicontinuous and then the sequences $(\gamma_m)_{m\in\N}$ in the definition of topologizability can be chosen independently of the seminorms involved $p$ and $q$.

\begin{definition}\label{weighted composition operator}
	Let $X\subseteq\R^d$ be open, $w\in C^\infty(X)$, and let $\psi:X\rightarrow X$ be smooth such that $\det J\psi(x)\neq 0$ for every $x\in X$, where $J\psi(x)$ denotes the Jacobian. The \textit{weighted composition operator} $C_{w,\psi}$ on $\mathscr{D}'(X)$ is defined as the unique continuous operator on $\mathscr{D}'(X)$ which extends $C_{w,\psi}$ on $C(X)$ defined as $C_{w,\psi}(f)=w\cdot\,(f\circ\psi)$, see e.g.\ \cite[Theorem 6.1.2]{HoermanderPDO1}. The function $\psi$ is called the \textit{symbol} and $w$ the \textit{weight} of $C_{w,\psi}$. For the special case $w=1$ we write $C_\psi$ instead of $C_{1,\psi}$ and $C_\psi$ is simply called composition operator.
\end{definition}

If $C_{w,\psi}(\mathscr{G}(X))\subseteq\mathscr{G}(X)$ it follows that $C_{w,\psi}$ is a continuous operator on $\mathscr{G}(X)$. We are interested to characterize when $C_{w,\psi}$ is topologizable etc.\ on $\mathscr{G}(X)$. For injective $\psi$ one verifies
\begin{align*}
\langle C_{w,\psi}^m(u),\varphi\rangle&=\left\langle u,\left(\varphi \frac{\prod_{j=0}^{m-1} w(\psi^j(\cdot))}{|\det J\psi^m(\cdot)|}\right)\circ(\psi^m)^{-1}\right\rangle\\
&=\left\langle u,\left(\varphi \prod_{j=0}^{m-1} \frac{w(\psi^j(\cdot))}{|\det J\psi(\psi^j(\cdot))|}\right)\circ(\psi^m)^{-1}\right\rangle
\end{align*}
for all $u\in\mathscr{D}'(X),\varphi\in\mathscr{D}(X), m\in\N$. 

\begin{definition}
	Let $X$ be a topological space and $\psi: X\rightarrow X$ be a continuous mapping. $\psi$ is said to have \textit{stable orbits} if for every compact subset $K\subseteq X$ there is another compact subset $L\subseteq X$ with $\psi^m(K)\subseteq L$ for every $m\in\N$.
\end{definition}

Our first result gives a sufficient condition on the symbol $\psi$ for the weighted composition operator $C_{w,\psi}$ to be topologizable. Clearly, for every topologizable operator $T$ on a locally convex space $E$ and every $T$-invariant subspace $F$ of $E$ the restriction of $T$ to $F$ is again topologizable. Hence, in the situation of the proposition below, given a sheaf $\mathscr{G}$ of distributions on $\R^d$ such that $C_{w,\psi}(\mathscr{G}(X))\subseteq\mathscr{G}(X)$ the restriction of $C_{w,\psi}$ to $\mathscr{G}(X)$ is topologizable if $\psi$ has stable orbits.

\begin{proposition}\label{stable sufficient for distributions}
	Let $X\subseteq\R^d$ be open, $w\in C^\infty(X)$, $\psi:X\rightarrow X$ be smooth and injective such that $\det J\psi(x)\neq 0$ for all $x\in X$. Assume that $\psi$ has stable orbits. Then $C_{w,\psi}$ is topologizable on $\mathscr{D}'(X)$. 	
\end{proposition}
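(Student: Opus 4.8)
The plan is to work directly with the strong dual topology on $\mathscr{D}'(X)$, for which a fundamental system of continuous seminorms is given by $p_B(u)=\sup_{\varphi\in B}|\langle u,\varphi\rangle|$, where $B$ ranges over the bounded subsets of $\mathscr{D}(X)$. Since every continuous seminorm is dominated by some $p_B$, it suffices, given a bounded $B\subseteq\mathscr{D}(X)$, to produce a single bounded set $B'\subseteq\mathscr{D}(X)$ and constants $\gamma_m>0$ with $p_B(C_{w,\psi}^m u)\le\gamma_m\,p_{B'}(u)$ for all $u$ and all $m\in\N$. Writing, for $\varphi\in\mathscr{D}(X)$ and $m\in\N$,
$$\Phi_{m,\varphi}:=\Big(\varphi\,\frac{\prod_{j=0}^{m-1}w(\psi^j(\cdot))}{|\det J\psi^m(\cdot)|}\Big)\circ(\psi^m)^{-1},$$
the transpose formula recalled above reads $\langle C_{w,\psi}^m u,\varphi\rangle=\langle u,\Phi_{m,\varphi}\rangle$, so that $p_B(C_{w,\psi}^m u)=\sup_{\varphi\in B}|\langle u,\Phi_{m,\varphi}\rangle|$. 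Everything therefore reduces to arranging $\gamma_m^{-1}\Phi_{m,\varphi}\in B'$ for all $\varphi\in B$ and all $m$, since the desired estimate is then immediate.

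First I would fix the supports. As $\psi$ is injective with nonvanishing Jacobian, the inverse function theorem makes $\psi$ a diffeomorphism onto the open set $\psi(X)$, and by the chain rule $\det J\psi^m=\prod_{j=0}^{m-1}\det J\psi(\psi^j(\cdot))\ne 0$, so each iterate $\psi^m$ is a diffeomorphism onto the open set $\psi^m(X)$ and $\Phi_{m,\varphi}$ is a well-defined element of $\mathscr{D}(X)$. Choosing a compact $K\subseteq X$ with $\supp\varphi\subseteq K$ for all $\varphi\in B$ (possible since $B$ is bounded), the multiplier inside the brackets is supported in $K$, whence $\supp\Phi_{m,\varphi}\subseteq\psi^m(K)$. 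Here the hypothesis that $\psi$ has stable orbits enters decisively: it furnishes a single compact $L\subseteq X$ with $\psi^m(K)\subseteq L$ for every $m\in\N$, so that all functions $\Phi_{m,\varphi}$ are supported in the fixed compact set $L$.

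It remains to control derivatives uniformly. For fixed $m$ the multiplier $\frac{\prod_{j=0}^{m-1}w(\psi^j(\cdot))}{|\det J\psi^m(\cdot)|}$ is a fixed smooth function and $(\psi^m)^{-1}$ is smooth on $\psi^m(X)\supseteq\psi^m(K)$; hence, as $B$ is bounded in $\mathscr{D}(X)$, the quantity
$$a_{m,n}:=\sup_{|\alpha|\le n}\ \sup_{\varphi\in B}\ \sup_{x\in X}\,|\partial^\alpha\Phi_{m,\varphi}(x)|$$
is finite for all $m,n\in\N$ (the chain and Leibniz rules express $\partial^\alpha\Phi_{m,\varphi}$ through derivatives of $\varphi$, of the multiplier, and of $(\psi^m)^{-1}$ up to order $|\alpha|$, all bounded on the relevant compact sets), and $n\mapsto a_{m,n}$ is nondecreasing. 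I would then set $C_n:=1+\max_{1\le m\le n}a_{m,n}$ and let $B'$ be the set of all $\phi\in\mathscr{D}(X)$ with $\supp\phi\subseteq L$ and $\sup_x|\partial^\alpha\phi(x)|\le C_{|\alpha|}$ for every multi-index $\alpha$; this $B'$ is bounded in $\mathscr{D}(X)$. For each fixed $m$ one has $a_{m,n}\le C_n$ as soon as $n\ge m$, leaving only finitely many ratios for $n<m$, so $\gamma_m:=\sup_{n}a_{m,n}/C_n<\infty$. By construction $a_{m,|\alpha|}\le\gamma_m C_{|\alpha|}$, hence $\gamma_m^{-1}\Phi_{m,\varphi}$ is supported in $L$ with $\sup_x|\partial^\alpha(\gamma_m^{-1}\Phi_{m,\varphi})|\le C_{|\alpha|}$, i.e.\ $\gamma_m^{-1}\Phi_{m,\varphi}\in B'$. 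This yields $p_B(C_{w,\psi}^m u)\le\gamma_m\,p_{B'}(u)$ and thus topologizability.

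The step I expect to be the genuine obstacle is not the support bound but the uniform control of derivatives, where those of $(\psi^m)^{-1}$ may grow both with $m$ and with the order of differentiation. What makes the argument succeed is that topologizability permits the constants $\gamma_m$ to depend on $m$: the diagonal choice of the single sequence $(C_n)$ absorbs the growth in $n$ into the one bounded set $B'$, while the growth in $m$ is shifted entirely into $\gamma_m$. I would finally check that the extension by zero of $\Phi_{m,\varphi}$ across $\partial\psi^m(X)$ is smooth and, if $0\in\N$ here, that the case $m=0$ causes no trouble; both are routine, the former because $\supp\Phi_{m,\varphi}$ is a compact subset of the open set $\psi^m(X)$.
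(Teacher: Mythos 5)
Your proof is correct and follows essentially the same route as the paper: reduce to the seminorms $p_B$, use the transpose formula and stable orbits to confine all the functions $\Phi_{m,\varphi}$ to a single compact $L$, and then absorb the countably many bounded sets $\{\Phi_{m,\varphi};\,\varphi\in B\}$ into scalar multiples of one bounded set. The only difference is that the paper obtains that last step by citing Mackey's countability condition for the metrizable space $\mathscr{D}(L)$, whereas you prove exactly that instance by hand with your diagonal choice of $(C_n)$ and $B'$.
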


\begin{proof}
	Let $K\subseteq X$ be compact and choose $L\subseteq X$ compact such that $\psi^m(K)\subseteq L$ for all $m\in\N$. For each $m\in\N$
	$$M_m:\mathscr{D}(K)\rightarrow\mathscr{D}(K),\varphi\mapsto\varphi \prod_{j=0}^{m-1}\frac{w(\psi^j(\cdot))}{|\det J\psi(\psi^j(\cdot))|}$$
	is continuous as is
	$$\Psi_m:\mathscr{D}(K)\rightarrow\mathscr{D}(\psi^m(K)),\varphi\mapsto\varphi\circ(\psi^m)^{-1}.$$
	Therefore, for every absolutely convex and bounded subset $B\subseteq\mathscr{D}(K)$ we obtain together with the correctly defined continuous inclusions
	$$\forall\,m\in\N:\mathscr{D}(\psi^m(K))\hookrightarrow\mathscr{D}(L)$$
	that
	$$\forall\,m\in\N: B_m:=\left(\Psi_m\circ M_m\right)(B)\subseteq \mathscr{D}(L)\mbox{ absolutely convex and bounded}.$$
	Since $\mathscr{D}(L)$ is metrizable it follows from Mackey's countability condition (see e.g.\ \cite[Proposition 2.6.3]{Horvath} or \cite[Lemma 26.6 a)]{MeVo1997}) that there are $\tilde{B}\subseteq\mathscr{D}(L)$ bounded, absolutely convex and closed, $(\alpha_m)_{m\in\N}$ in $(0,\infty)$ such that
	$$\forall\,m\in\N:\,B_m\subseteq\alpha_m \tilde{B}.$$
	Thus, we obtain for $\varphi\in B, u\in\mathscr{D}'(X)$
	\begin{align*}
	|\langle C_{w,\psi}^m(u),\varphi\rangle|&=|\langle u, (\Psi_m\circ M_m)\varphi\rangle|=\alpha_m\left|\left\langle u,\frac{1}{\alpha_m}\left(\Psi_m\circ M_m\right)\varphi\right\rangle\right|\\
	&\leq\alpha_m\sup\left\{|\langle u,\phi\rangle|;\,\phi\in \tilde{B}\right\}
	\end{align*}
	which implies for all $m\in\N$ and $u\in\mathscr{D}'(X)$
	$$\sup\left\{|\langle C_{w,\psi}^m (u),\varphi\rangle|;\,\varphi\in B\right\}\leq\alpha_m\sup\left\{|\langle u,\phi\rangle|;\,\phi\in \tilde{B}\right\}.$$
	Because every absolutely convex and bounded subset $B\subseteq\mathscr{D}(X)$ is contained in $\mathscr{D}(K)$ and bounded in $\mathscr{D}(K)$ for a suitable compact set $K\subseteq X$ (see e.g.\ \cite[Theorem 6.5]{RudinFA} or \cite[Example 2.12.6]{Horvath}) the proof is finished.
\end{proof}

The next result shows that under suitable additional hypothesis on $\mathscr{G}(X)$ as well as on $w$ and $\psi$, topologizability of $C_{w,\psi}$ implies that $\psi$ has stable orbits.  

\begin{lemma}\label{stable necessary distributions}
	Let $\mathscr{G}$ be a sheaf of distributions defined by local properties, $X\subseteq\R^d$ be open, $w\in C^\infty(X)$, $\psi:X\rightarrow X$ be smooth and injective such that $\det J\psi(x)\neq 0$ for all $x\in X$. Assume that $C_{w,\psi}(\mathscr{G}(X))\subseteq\mathscr{G}(X)$ and that additionally the following conditions hold.
	\begin{enumerate}
		\item[a)] There is an open, relatively compact exhaustion $(X_n)_{n\in\N}$ of $X$ such that for each $n\in\N$, every $x\in X\backslash\overline{X}_n$, and every $\varepsilon>0$ for which $\overline{B(x,\varepsilon)}\subseteq X\backslash{X}_n$, the restriction
		$$r_X^{X_n\cup B(x,\varepsilon)}:\mathscr{G}(X)\rightarrow\mathscr{G}(X_n\cup B(x,\varepsilon))$$
		has dense range, where $B(x,\varepsilon)$ denotes the open euclidean ball around $x$ with radius $\varepsilon$.
		\item[b)] There is $\varepsilon_0>0$ such that for all $\varepsilon\in (0,\varepsilon_0)$ there is $\chi_\varepsilon\in\mathscr{D}(B(0,\varepsilon))$ such that for all $x\in X$ with $\overline{B(x,\varepsilon)}\subseteq X$ there is $h\in\mathscr{G}(B(x,\varepsilon))$ satisfying
		$$\langle h,\tau_x\chi_\varepsilon\rangle\neq 0,$$
		where $\tau_x\chi_\varepsilon(y):=\chi_\varepsilon(y-x)$.
		\item[c)] For every $l\in\N_0$ the set $\left\{x\in X;\, w(\psi^l(x))\neq0\right\}$ is dense in $X$.
	\end{enumerate}
	If $C_{w,\psi}$ is topologizable on $\mathscr{G}(X)$, then $\psi$ has stable orbits.
\end{lemma}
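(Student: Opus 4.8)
The plan is to argue by contraposition: assuming that $\psi$ does \emph{not} have stable orbits, I will exhibit one fixed continuous seminorm $p$ on $\mathscr{G}(X)$ witnessing that $C_{w,\psi}$ fails to be topologizable. Failure of stable orbits gives a compact $K\subseteq X$ whose forward orbit $\bigcup_{m}\psi^m(K)$ is not relatively compact in $X$; equivalently, for the exhaustion $(X_n)$ from a), for every $n$ there are $m$ and a point of $K$ whose $m$-th iterate lies outside $\overline{X}_n$. From this I would extract strictly increasing sequences $(m_k)$ and $(N_k)$ and base points $y_k$, lying in one fixed compact neighbourhood $\tilde K$ of $K$, with $z_k:=\psi^{m_k}(y_k)\notin\overline{X}_{N_k}$.

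The conceptual key is to test $C_{w,\psi}^{m_k}$ against the \emph{inverse image} of the detectable bump $\tau_{z_k}\chi_{\varepsilon_k}$ from b). Writing $g_m:=\prod_{j=0}^{m-1}\frac{w(\psi^j(\cdot))}{|\det J\psi(\psi^j(\cdot))|}$, so that $\langle C_{w,\psi}^{m}u,\varphi\rangle=\langle u,(\varphi\,g_{m})\circ(\psi^{m})^{-1}\rangle$, I would set
$$\varphi_k:=\frac{\chi_{\varepsilon_k}\big(\psi^{m_k}(\cdot)-z_k\big)}{g_{m_k}(\cdot)},$$
which gives $(\varphi_k\,g_{m_k})\circ(\psi^{m_k})^{-1}=\tau_{z_k}\chi_{\varepsilon_k}$ exactly. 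Here c) is decisive: since each $\{x:\,w(\psi^l(x))\neq0\}$ is open and dense and $\det J\psi$ never vanishes, the set $\{g_{m_k}\neq0\}$ is a finite intersection of dense open sets, hence dense and open; thus $y_k$ can be selected close to $K$, escaping $\overline{X}_{N_k}$ at time $m_k$, and with $g_{m_k}$ non-vanishing near $y_k$. Shrinking $\varepsilon_k$ keeps $\operatorname{supp}\varphi_k=(\psi^{m_k})^{-1}(B(z_k,\varepsilon_k))$ inside $\tilde K$ and inside $\{g_{m_k}\neq0\}$, so every $\varphi_k$ lies in the Fréchet space $\mathscr{D}(\tilde K)$. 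Applying Mackey's countability condition to the bounded singletons $\{\varphi_k\}$ (just as in the proof of Proposition \ref{stable sufficient for distributions}) yields one bounded, absolutely convex $\tilde B\subseteq\mathscr{D}(\tilde K)$ and scalars $\alpha_k>0$ with $\varphi_k\in\alpha_k\tilde B$; I then take $p:=p_{\tilde B}$, i.e.\ $p(u)=\sup_{\phi\in\tilde B}|\langle u,\phi\rangle|$.

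To see that this fixed $p$ defeats every candidate $q$, let $q=p_{B'}$ with $B'\subseteq\mathscr{D}(X_N)$ bounded, and pick $k$ with $N_k\ge N$, so that $\overline{B(z_k,\varepsilon_k)}\subseteq X\setminus X_{N}$ is disjoint from $X_N$. Gluing the zero section on $X_N$ with the local section $h_k\in\mathscr{G}(B(z_k,\varepsilon_k))$ from b) (for which $\langle h_k,\tau_{z_k}\chi_{\varepsilon_k}\rangle\neq0$) produces $v\in\mathscr{G}(X_N\cup B(z_k,\varepsilon_k))$ with $v_{|X_N}=0$, and by the density in a) there are $u_j\in\mathscr{G}(X)$ with $r_X^{X_N\cup B(z_k,\varepsilon_k)}(u_j)\to v$ strongly. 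Then $q(u_j)\to\sup_{\phi\in B'}|\langle v,\phi\rangle|=0$, whereas
$$p(C_{w,\psi}^{m_k}u_j)\ge\tfrac1{\alpha_k}\,\big|\langle u_j,(\varphi_k\,g_{m_k})\circ(\psi^{m_k})^{-1}\rangle\big|=\tfrac1{\alpha_k}\,\big|\langle u_j,\tau_{z_k}\chi_{\varepsilon_k}\rangle\big|\longrightarrow\tfrac1{\alpha_k}\,\big|\langle h_k,\tau_{z_k}\chi_{\varepsilon_k}\rangle\big|>0.$$
Hence no constant $\gamma_{m_k}$ can satisfy $p(C_{w,\psi}^{m_k}u_j)\le\gamma_{m_k}q(u_j)$, so $C_{w,\psi}$ is not topologizable, which is the desired contradiction.

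I expect the main difficulty to be organizational rather than a single sharp estimate: the data $(m_k,y_k,z_k,\varepsilon_k)$ must be chosen so that, simultaneously, the orbit escapes the prescribed level, the weight product $g_{m_k}$ stays non-zero (via c), all supports $\operatorname{supp}\varphi_k$ remain in one fixed compact set (so that Mackey's condition applies), and the escaped ball is disjoint from $X_N$ (so that the sheaf gluing and the density in a) can be invoked at level $N$). The subtle point worth emphasizing is the choice of the inverse-image test functions $\varphi_k$ — which forces $C_{w,\psi}^{m_k}$ to reproduce \emph{exactly} the shape $\chi_{\varepsilon_k}$ that $\mathscr{G}$ is guaranteed to detect — together with the device of absorbing their uncontrolled growth into the single seminorm $p$ through Mackey's countability condition.
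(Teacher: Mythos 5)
Your proposal is correct and follows essentially the same route as the paper's proof: the same ``inverse-image'' test functions $\chi_{\varepsilon}\big(\psi^{m}(\cdot)-\psi^{m}(x_0)\big)/g_{m}$, Mackey's countability condition to absorb their uncontrolled growth into a single bounded set defining $p$, and the gluing of the zero section over $X_N$ with the detectable section $h$ from b) combined with the density hypothesis a); your contrapositive organization is in fact a little lighter, since knowing the escaping points in advance lets you use one test function per scale instead of the paper's uniform families $B_{l,m,n}$ over all of $Y_{l,m,n}$ and its uniform-continuity estimate for $(\psi^{m})^{-1}$. Two small points to tidy up: $\varepsilon_k$ must also be taken small enough that $\overline{B(z_k,\varepsilon_k)}\subseteq\psi^{m_k}(X)$, so that $(\varphi_k\,g_{m_k})\circ(\psi^{m_k})^{-1}$ really equals $\tau_{z_k}\chi_{\varepsilon_k}$ as an element of $\mathscr{D}(X)$ rather than a truncation of it, and hypothesis a) only gives density (not sequential density) in the non-metrizable space $\mathscr{G}(X_N\cup B(z_k,\varepsilon_k))$ --- but since you impose only two neighbourhood conditions on the approximant, a single $u$ chosen in their intersection, as in the paper, does the job.
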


\begin{remark}\label{sufficient a) and b) distributions}
	Before we present the technical proof of the above lemma we take a closer look at its additional assumptions a) - c).
	
	Under the hypothesis on $\psi$ in the above lemma it follows that for every $x\in X$ there is an open neighborhood $U_x\subseteq X$ such that $\psi_{|U_x}$ is (injective and) open. Hence, if for $w\in C^\infty(X)$ the set $\left\{x\in X;\, w(x)\neq 0\right\}$ is dense in $X$, it follows from \cite[Proposition 3.9]{Kalmes19-2} that the above hypothesis c) is fulfilled. 
	
	The above hypothesis b) is satisfied whenever $\mathscr{G}$ is invariant under translations (i.e.\ whenever for $u\in\mathscr{G}(X)$ we have $\tau_x u\in\mathscr{G}(-x+X)$, where
	$$\forall\,\varphi\in\mathscr{D}(-x+X):\,\langle\tau_x u,\varphi\rangle=\langle u, \tau_{-x}\varphi\rangle)$$
	and $\mathscr{G}$ satisfies
	$$\exists\varepsilon_0>0\forall\varepsilon\in (0,\varepsilon_0)\exists\chi\in\mathscr{D}\left(B(0,\varepsilon)\right), h\in\mathscr{G}\left(B(0,\varepsilon)\right):\,\langle h,\chi\rangle\neq 0.$$
	Apart from $\mathscr{G}=\mathscr{D}'$ this is in particular the case for $\mathscr{G}=\mathscr{D}'_P$ whenever $P\in\C[X_1,\ldots,X_d], d\geq 2,$ is non-constant. Indeed, while translation invariance is obvious, since $P$ is non-constant
	$$V(P)=\left\{\zeta\in\C^d;\,P(\zeta)=0\right\}$$
	is neither empty nor discrete. With $e_\zeta(x)=\exp\left(\sum_{j=1}^d x_j\zeta_j\right),\zeta\in\C^d,$ we have $e_\zeta\in\mathscr{D}'_P(X),\zeta\in V(P),$ and for every $\chi\in\mathscr{D}(\R^d)$ the Fourier-Laplace Transform
	$$\C^d\rightarrow\C,\zeta\mapsto\hat{\chi}(\zeta)=\langle e_\zeta,\chi\rangle$$
	is analytic so that $\hat{\chi}_{|V(P)}=0$ implies $\hat{\chi}=0$ and thus $\chi=0$.
	
	Finally, as far as hypothesis a) of the above lemma is concerned, it is satisfied for $\mathscr{G}=\mathscr{D}'$. Indeed, using that every bounded subset of $\mathscr{D}(X)$ is contained and bounded in $\mathscr{D}(K)$ for suitable compact $K\subseteq X$ and using multiplication with compactly supported $\varphi\in\mathscr{D}(X)$ for which $\varphi=1$ in a neighborhood of $K$ one easily verifies that $\mathscr{E}'(\R^d)$ is dense in $\mathscr{D}'(X)$. Therefore, $\mathscr{G}=\mathscr{D}'$ fulfils hypothesis a) for each $X$ and an arbitrary open, relatively compact exhaustion $(X_n)_{n\in\N}$. 
	
	Moreover, for $X\subseteq\R^d, d\geq 3,$ let
	$$\forall\,n\in\N:\,X_n:=\left\{x\in X;\,|x|<n, \dist(x,\R^d\backslash X)>\frac{1}{n}\right\},$$
	so that $(X_n)_{n\in\N}$ is a relatively compact exhaustion of $X$. Given a polynomial $P\in\C[X_1,\ldots,X_d]$ with principal part $P_m$ such that $\left\{\xi\in\R^d;\,P_m(\xi)=0\right\}$ is contained in a one-dimensional subspace of $\R^d$ and such that $P(\partial)$ is surjective on $C^\infty(X)$, it follows from \cite[Theorem 4.4 ii)]{Kalmes19-2} combined with \cite[Theorem 3.1]{Kalmes18-2} that hypothesis a) of the above corollary is fulfilled for $\mathscr{G}=\mathscr{D}'_P$. Of course, for (hypo)elliptic $P$ we have $C_P^\infty=\mathscr{D}'_P$ (topologically!) and this sheaf is covered by \cite[Section 4]{Kalmes19-2}. However, for the (non-hypoelliptic) time-dependent free Schr\"odinger operator $P(\partial)=i\frac{\partial}{\partial t}+\Delta_x$ it follows that hypothesis a) (and b)) of Lemma \ref{stable necessary distributions} are satisfied for the corresponding sheaf $\mathscr{D}'_P$ provided that $P(\partial)$ is surjective on $C^\infty(X)$. A geometric/topological characterization of those $X$ fulfilling the latter property was recently given in \cite[Corollary 5]{Kalmes19-1}.
\end{remark}

\textit{Proof of Lemma \ref{stable necessary distributions}.}
Let $(X_n)_{n\in\N}$ be the open, relatively compact exhaustion of $X$ from hypothesis a). Clearly, the claim will follow once we have shown that for every $n\in\N$ there is $k\in\N$ with $\psi^m(\overline{X}_n)\subseteq \overline{X}_k$ for all $m\in\N$. In order to do so, some technical preparations have to be made which will be finished once we have proved (\ref{special test function}) below.

For $n\in\N$ and $m\in\N_0$ we define
$$\delta_{m,n}:=\dist\left(\psi^m(\overline{X}_n),\R^d\backslash\psi^m(X_{n+1})\right)$$
and
$$\delta_n:=\delta_{0,n},$$
so that $\delta_{m,n}>0$. It follows from hypothesis c) that the set
$$\bigcap_{l=0}^{m-1}\left\{x\in X;\,w(\psi^l(x))\neq 0\right\}$$
is dense in $X$ for every $m\in\N$. For $n,m,l\in\N$ with $l>\frac{2}{\delta_n}$ it follows that $B\left(x,2/l\right)\subseteq X_{n+1}$ whenever $x\in\overline{X}_n$ and we define
$$Y_{l,m,n}:=\left\{x\in\overline{X}_n;\,\forall\,y\in B\left(x,\frac{2}{l}\right):\,\prod_{j=0}^{m-1}w(\psi^j(y))\neq 0\right\}$$
and
$$X_{l,m,n}:=\bigcup_{x\in Y_{l,m,n}}B\left(x,\frac{2}{l}\right)$$
so that $X_{l,m,n}\subseteq X_{n+1}$. We then have
\begin{equation}\label{inclusion 1}
\overline{\bigcup_{x\in Y_{l,m,n}}B\left(x,\frac{1}{l}\right)}\subseteq X_{l,m,n}.
\end{equation}
Indeed, if $y\in\overline{\cup_{x\in Y_{l,m,n}}B\left(x,1/l\right)}$ there are sequences $(x_k)_{k\in\N}$ in $Y_{l,m,n}$ and $(z_k)_{k\in\N}$ in $B\left(0,1/l\right)$ such that $(x_k+z_k)_{k\in\N}$ converges to $y$. Since $Y_{l,m,n}\subseteq\overline{X}_n$ we can assume without loss of generality that $(x_k)_{k\in\N}$ converges in $\overline{X}_n$ and $(z_k)_{k\in\N}$ converges in $\overline{B\left(0,1/l\right)}$; we denote the limits by $x_0$ and $z_0$, respectively. For $v\in B\left(x_0,2/l\right)$ and $k$ sufficiently large we have
$$|x_k-x_0|<\frac{2}{l}-|x_0-v|$$
so that
$$|x_k-v|\leq |x_k-x_0|+|x_0-v|<\frac{2}{l},$$
i.e.\ $v\in B\left(x_k,2/l\right)$ hence $\prod_{j=0}^{m-1}w(\psi^j(v))\neq 0$ because $x_k\in Y_{l,m,n}$. As $v\in B\left(x_0,2/l\right)$ was chosen arbitrarily, it follows that $x_0\in Y_{l,m,n}$ so that
$$y=x_0+z_0\in\bigcup_{x\in Y_{l,m,n}}B\left(x,\frac{2}{l}\right)=X_{l,m,n}$$
showing (\ref{inclusion 1}).

Since the bijection
$$\left(\psi^m_{|\psi^m(\overline{X}_{n+1})}\right)^{-1}:\psi^m(\overline{X}_{n+1})\rightarrow\overline{X}_{n+1}$$
is uniformly continuous, for $l>2/\delta_n$ there is $\beta_{l,m}>0$ such that
\begin{equation}\label{uniform continuity}
\forall\,\psi^m(x),\psi^m(y)\in\psi^m(\overline{X}_{n+1}), |\psi^m(x)-\psi^m(y)|<\beta_{l,m}:\,|x-y|<\frac{1}{l}.
\end{equation}
For every $l,m,n\in\N$ with $l>2/\delta_n$ we choose (with $\varepsilon_0$ from hypothesis b))
$$\varepsilon_{l,m,n}\in \left(0,\min\left\{\varepsilon_0,\frac{1}{l},\frac{\delta_{m,n}}{2},\beta_{l,m}\right\}\right)$$
and let $\chi_{l,m,n}:=\chi_{\varepsilon_{l,m,n}}\in\mathscr{D}\left(B\left(0,\varepsilon_{l,m,n}\right)\right)$ according to hypothesis b).

For every $x_0\in Y_{l,m,n}$ we have
\begin{equation}\label{special test function}
\R^d\rightarrow\C, y\mapsto\chi_{l,m,n}\left(\psi^m(y)-\psi^m(x_0)\right)\in\mathscr{D}\left(\overline{\cup_{x\in Y_{l,m,n}}B(x,1/l)}\right).
\end{equation}
Indeed, if $y\in\R^d$ is such that $\chi_{l,m,n}\left(\psi^m(y)-\psi^m(x_0)\right)\neq 0$ it follows that
$$\psi^m(y)\in B\left(\psi^m(x_0),\varepsilon_{l,m,n}\right)\subseteq\psi^m\left(\overline{X}_n\right)+B(0,\varepsilon_{l,m,n})\subseteq\psi^m(X_{n+1}),$$
where we have used $\varepsilon_{l,m,n}<\delta_{m,n}/2$ and the definition of $\delta_{m,n}$ in the last inclusion. Because $\psi^m$ in injective, we conclude $y\in X_{n+1}$. Because, moreover
$$|\psi^m(y)-\psi^m(x_0)|<\varepsilon_{l,m,n}\leq \beta_{l,m}$$
we also have $|y-x_0|<1/l$ by (\ref{uniform continuity}). Hence
$$y\in B\left(x_0,\frac{1}{l}\right)\subseteq\bigcup_{x\in Y_{l,m,n}}B\left(x,\frac{1}{l}\right)$$
so that the support of $y\mapsto\chi_{l,m,n}(\psi^m(y)-\psi^m(x_0))$ is contained in the closure of $\cup_{x\in Y_{l,m,n}}B\left(x,1/l\right)$ which proves (\ref{special test function}).

We now fix $n\in\N$. Recall that our objective is to prove the existence of $k\in\N$ satitsfying $\psi^m(\overline{X}_n)\subseteq\overline{X}_k$ for all $m\in\N$. Since for $m,l\in\N$ with $l>2/\delta_n$ we have $X_{l,m,n}\subseteq X_{n+1}$, it follows from $(\ref{inclusion 1})$ and the relative compactness of $X_{n+1}$ that the closure of $\cup_{x\in Y_{l,m,n}}B(x,1/l)$ is a compact subset of $X_{l,m,n}$. Moreover, from the definition of $Y_{l,m,n}$ it follows that $\psi^m\left(Y_{l,m,n}\right)\subseteq\psi^m\left(\overline{X}_n\right)$ so that compactness of $\psi^m\left(\overline{X}_n\right)$ implies that
$$\left\{\chi_{l,m,n}\left(\psi^m(\cdot)-\psi^m(x_0)\right);\,x_0\in Y_{l,m,n}\right\}$$
is a bounded subset of $\mathscr{D}(X_{l,m,n})$. From the definition of $X_{l,m,n}$ it follows that
$$\mathscr{D}(X_{l,m,n})\rightarrow\mathscr{D}(X_{l,m,n}),\varphi\mapsto\frac{|\det J\psi^m(\cdot)|}{\prod_{j=0}^{m-1} w(\psi^j(\cdot))}\varphi$$
is well defined and continuous so that
$$\left\{\frac{|\det J\psi^m(\cdot)|}{\prod_{j=0}^{m-1} w(\psi^j(\cdot))}\chi_{l,m,n}\left(\psi^m(\cdot)-\psi^m(x_0)\right);\,x_0\in Y_{l,m,n}\right\}$$
is a bounded subset of $\mathscr{D}(X_{l,m,n})$. From the continuity of the inclusion $\mathscr{D}(X_{l,m,n})\hookrightarrow\mathscr{D}\left(\overline{X}_{n+1}\right)$ $(X_{l,m,n}\subseteq\overline{X}_{n+1})$, we derive that for all $l,m\in\N, l>2/\delta_n$,
\begin{equation*}
B_{l,m,n}:=\left\{\frac{|\det J\psi^m(\cdot)|}{\prod_{j=0}^{m-1} w(\psi^j(\cdot))}\chi_{l,m,n}\left(\psi^m(\cdot)-\psi^m(x_0)\right);\,x_0\in Y_{l,m,n}\right\}
\end{equation*}
is a bounded subset of $\mathscr{D}(\overline{X}_{n+1})$.

Because $\mathscr{D}\left(\overline{X}_{n+1}\right)$ is metrizable, it follows from Mackey's countability condition (see e.g.\ \cite[Proposition 2.6.3]{Horvath} or \cite[Lemma 26.6 a)]{MeVo1997}) that there are a closed, absolutely convex, and bounded $B\subseteq\mathscr{D}\left(\overline{X}_{n+1}\right)$ and strictly positive numbers $\alpha_{l,m,n} (l,m\in\N, l>2/\delta_n)$
such that
\begin{equation}\label{inclusion 2}
B_{l,m,n}\subseteq\alpha_{l,m,n} B.	
\end{equation}

Let $B^\circ$ denote the polar of $B$ with respect to the dual pair $(\mathscr{D}(X),\mathscr{D}'(X))$. Now, as $\mathscr{G}(X)$ and $\mbox{proj}_{\leftarrow j}(\mathscr{G}\left(X_j),r_{X_{j+1}}^{X_j}\right)$ are topologically isomorphic, from the topologizability of $C_{w,\psi}$ it follows that for the zero neighborhood $B^\circ\cap\mathscr{G}(X)$ in $\mathscr{G}(X)$ there is $k\in\N$ and a zero neighborhood $U_k$ in $\mathscr{G}(X_k)$ such that for all $m\in\N$ there are $\gamma_m$ with
\begin{equation}\label{candidate for k}
C_{w,\psi}^m\left((r_X^{X_k})^{-1}(U_k)\right)\subseteq \gamma_m (B^\circ\cap\mathscr{G}(X))\subseteq\gamma_m B^\circ.
\end{equation}
We shall show that $\psi^m\left(\overline{X}_n\right)\subseteq\overline{X}_k$ for all $m\in\N$. Taking polars with respect to the dual pair $(\mathscr{D}(X),\mathscr{D}'(X))$, (\ref{candidate for k}) together with the Bipolar Theorem (cf.\ \cite[Theorem 22.13]{MeVo1997})) implies
$$\forall\,m\in\N:\,\left(C_{w,\psi}^t\right)^m(B)\subseteq\gamma_m\left(\left(r_X^{X_k}\right)^{-1}(U_k)\right)^\circ,$$
where $C_{w,\psi}^t$ denotes the transpose of $C_{w,\psi}$ on $\mathscr{D}(X)$. By (\ref{inclusion 2}) we deduce
\begin{equation}\label{inclusion 3}
\forall\,l,m\in\N, l>2/\delta_n: \left(C_{w,\psi}^t\right)^m(B_{l,m,n})\subseteq\alpha_{l,m,n}\gamma_m\left(\left(r_X^{X_k}\right)^{-1}(U_k)\right)^\circ.
\end{equation}
In order to show $\psi^m\left(\overline{X}_n\right)\subseteq\overline{X}_k, m\in\N,$ we argue by contradiction. We assume the existence of $m_0\in\N$ and
$$x_0\in\left\{x\in X_n;\,\prod_{j=0}^{m_0-1}w(\psi^j(x))\neq 0\right\}$$
such that $\psi^{m_0}(x_0)\notin \overline{X}_k$. Then there is $l\in\N, l>2/\delta_n,$ with $x_0\in Y_{l,m_0,n}$ and because $Y_{l+1,m_0,n}\subseteq Y_{l,m_0,n}$ we can have $l_0$ so large that $x_0\in Y_{l_0,m_0,n}$ and $\overline{B(\psi^{m_0}(x_0),1/l_0)}\subseteq X\backslash\overline{X}_k$ and such that according to hypothesis a) $r_X^{X_k\cup U}$ has dense range for $U:=B(\psi^{m_0}(x_0),1/l_0)$.

Choose $h\in\mathscr{G}\left(B(\psi^{m_0}(x_0),1/l_0)\right)$ for
$$\varphi:=\chi_{l_0,m_0,n}(\cdot -\psi^{m_0}(x_0))=\chi_{\varepsilon_{l_0,m_0,n}}(\cdot-\psi^{m_0}(x_0))$$
according to hypothesis b) where without loss of generality we assume that
$$\delta_\varphi(h):=\langle h,\varphi\rangle =1.$$
By the properties of a sheaf, there is $v\in\mathscr{G}(X_k\cup U)$ such that $r_{X_k\cup U}^{X_k}(v)=0$ and $r_{X_k\cup U}^U(v)=3\alpha_{l_0,m_0,n}\gamma_{m_0} h$. Since $\varphi\in\mathscr{D}(U)$ we have $\delta_\varphi(v)=3 \alpha_{l_0,m_0,n}\gamma_{m_0}$.

Because $r_X^{X_k\cup U}$ has dense range by hypothesis a) there is $u\in\mathscr{G}(X)$ such that
$$r_X^{X_k\cup U}(u)-v\in\left(r_{X_k\cup U}^{X_k}\right)^{-1}(U_k)\cap\delta_\varphi^{-1}\left(B(0,\alpha_{l_0,m_0,n}\gamma_{m_0})\right),$$
where $U_k$ is the zero neighborhood in $\mathscr{G}(X_k)$ from (\ref{candidate for k}), so that
\begin{equation}\label{preparing contradiction}
\delta_\varphi\left(r_X^{X_k\cup U}(u)\right)\in B(3\alpha_{l_0,m_0,n}\gamma_{m_0},\alpha_{l_0,m_0,n}\gamma_{m_0})
\end{equation}
as well as
$$r_X^{X_k}(u)=r_{X_k\cup U}^{X_k}\left(r_X^{X_k\cup U}(u)-v\right)+r_{X_k\cup U}^{X_k}(v)=r_{X_k\cup U}^{X_k}\left(r_X^{X_k\cup U}(u)-v\right)+0\in U_k,$$
that is
$$u\in \left(r_X^{X_k}\right)^{-1}(U_k).$$
Because by definition of $B_{l_0,m_0,n}$ and $x_0\in Y_{l_0,m_0,n}$ we have
$$\frac{|\det J\psi^{m_0}(\cdot)|}{\prod_{j=0}^{m_0-1} w(\psi^j(\cdot))}\chi_{l_0,m_0,n}\left(\psi^{m_0}(\cdot)-\psi^{m_0}(x_0)\right)\in B_{l_0,m_0,n}$$
it follows herefrom, (\ref{inclusion 3}), and (\ref{preparing contradiction})
\begin{align*}
\alpha_{l_0,m_0,n}\gamma_{m_0}\geq&\left|\left\langle \left(C_{w,\psi}^t\right)^{m_0}\left(\frac{|\det J\psi^{m_0}(\cdot)|}{\prod_{j=0}^{m_0-1} w(\psi^j(\cdot))}\chi_{l_0,m_0,n}\left(\psi^{m_0}(\cdot)-\psi^{m_0}(x_0)\right)\right),u\right\rangle\right|\\
=&\left|\left\langle \left[\frac{\prod_{j=0}^{m_0-1}w(\psi^j(\cdot))}{|\det J\psi^{m_0}(\cdot)|}\frac{|\det J\psi^{m_0}(\cdot)|}{\prod_{j=0}^{m_0-1} w(\psi^j(\cdot))}\chi_{l_0,m_0,n}\left(\psi^{m_0}(\cdot)-\psi^{m_0}(x_0)\right)\right]\right.\right.\\
&\left. \left. \circ(\psi^{m_0})^{-1},u\right\rangle\right|\\
=&\left|\left\langle\chi_{l_0,m_0,n}(\cdot -\psi^{m_0}(x_0)), u\right\rangle\right|=\left|\delta_\varphi\left(r_X^{X_k\cup U}(u)\right)\right|>2\alpha_{l_0,m_0,n}\gamma_{m_0}
\end{align*}
which gives a contradiction.

Therefore,
$$\forall\,m\in\N:\,\psi^m\left(\left\{x\in X_n;\,\prod_{l=0}^{m-1}w(\psi^l(x))\neq 0\right\}\right)\subseteq\overline{X}_k.$$
Because $\psi^m$ is continuous and because
$$\left\{x\in X_n;\,\prod_{l=0}^{m-1}w(\psi^l(x))\neq 0\right\}=\bigcap_{l=0}^{m-1}\left\{x\in X_n;\,w(\psi^l(x))\neq 0\right\}$$
is dense in $X_n$ we conclude
$$\forall\,m\in\N:\,\psi^m\left(\overline{X}_n\right)\subseteq\overline{X}_k.$$
Because $n$ was arbitrarily chosen and $(X_n)_{n\in\N}$ is an open, relatively compact exhaustion of $X$ it finally follows that $\psi$ has stable orbits.\hfill$\square$\\

Combining Proposition \ref{stable sufficient for distributions} and Lemma \ref{stable necessary distributions} we obtain a characterization of topologizability for weighted composition operators.

\begin{theorem}\label{characterization topologizability distributions}
	Let $X\subseteq\R^d$ be open, $w\in C^\infty(X)$, $\psi:X\rightarrow X$ smooth and injective such that $\det J\psi(x)\neq 0$ for all $x\in X$. Assume that $\mathscr{G}$ is a sheaf of distributions defined by local properties such that $C_{w,\psi}(\mathscr{G}(X))\subseteq\mathscr{G}(X)$ and that additionally the following conditions hold.
	\begin{enumerate}
		\item[a)] There is an open, relatively compact exhaustion $(X_n)_{n\in\N}$ of $X$ such that for each $n\in\N$ and every $x\in X\backslash\overline{X}_n$  and every $\varepsilon>0$ for which $\overline{B(x,\varepsilon)}\subseteq X\backslash{X}_n$ the restriction $r_X^{X_n\cup B(x,\varepsilon)}$ has dense range.
		\item[b)] There is $\varepsilon_0>0$ such that for all $\varepsilon\in (0,\varepsilon_0)$ there is $\chi_\varepsilon\in\mathscr{D}(B(0,\varepsilon))$ such that for all $x\in X$ with $\overline{B(x,\varepsilon)}\subseteq X$ there is $h\in\mathscr{G}(B(x,\varepsilon))$ satisfying $\langle h,\tau_x\chi_\varepsilon\rangle\neq 0$, where $\tau_x\chi_\varepsilon(y):=\chi_\varepsilon(y-x)$.
		\item[c)] For every $l\in\N_0$ the set $\left\{x\in X;\,w(\psi^l(x))\neq 0\right\}$ is dense in $X$.
	\end{enumerate}
	Then for the weighted composition operator $C_{w,\psi}$ on $\mathscr{G}(X)$ the following are equivalent.
	\begin{enumerate}
		\item[i)] $C_{w,\psi}$ is topologizable.
		\item[ii)] $\psi$ has stable orbits. 
	\end{enumerate}
\end{theorem}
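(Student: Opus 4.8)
The plan is to read the theorem off directly from the two preceding results, since its two implications are exactly Proposition \ref{stable sufficient for distributions} and Lemma \ref{stable necessary distributions}. The only thing requiring a short argument is the passage from $\mathscr{D}'(X)$ to the subspace $\mathscr{G}(X)$, which I would record as the descent of topologizability to an invariant subspace carrying the relative topology.

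For the implication ii) $\Rightarrow$ i), I would first invoke Proposition \ref{stable sufficient for distributions}: since $\psi$ is smooth and injective with $\det J\psi(x)\neq 0$ everywhere and has stable orbits, $C_{w,\psi}$ is topologizable on all of $\mathscr{D}'(X)$. It then remains to descend to $\mathscr{G}(X)$. Here I would use that $\mathscr{G}(X)$ is $C_{w,\psi}$-invariant by hypothesis and carries the relative topology inherited from $\mathscr{D}'(X)$, so that each $p\in cs(\mathscr{G}(X))$ is dominated by the restriction $P_{|\mathscr{G}(X)}$ of some $P\in cs(\mathscr{D}'(X))$ (compare the Minkowski functionals of an absolutely convex $0$-neighbourhood $U$ in $\mathscr{D}'(X)$ with that of $U\cap\mathscr{G}(X)$). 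Applying the topologizability inequality for $C_{w,\psi}$ on $\mathscr{D}'(X)$ to $P$ yields $Q\in cs(\mathscr{D}'(X))$ and constants $\gamma_m>0$ with $P(C_{w,\psi}^m u)\leq\gamma_m Q(u)$ for all $u\in\mathscr{D}'(X)$; restricting both sides to $\mathscr{G}(X)$ and setting $q:=Q_{|\mathscr{G}(X)}$ gives $p(C_{w,\psi}^m v)\leq\gamma_m q(v)$ for all $v\in\mathscr{G}(X)$. This is precisely the general remark preceding Proposition \ref{stable sufficient for distributions}, namely that the restriction of a topologizable operator to an invariant subspace is again topologizable.

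For the implication i) $\Rightarrow$ ii) there is nothing further to do: the standing assumptions on $X$, $w$, $\psi$ together with hypotheses a), b), c) of the theorem coincide verbatim with those of Lemma \ref{stable necessary distributions}, whose conclusion is exactly that topologizability of $C_{w,\psi}$ on $\mathscr{G}(X)$ forces $\psi$ to have stable orbits.

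I do not expect a genuine obstacle here, since the substance of the argument is already contained in the (technical) proof of Lemma \ref{stable necessary distributions}. The only point deserving care is the seminorm bookkeeping in ii) $\Rightarrow$ i): one must ensure that the dominating seminorm $q$ on $\mathscr{G}(X)$ and the constants $\gamma_m$ are produced from data on $\mathscr{D}'(X)$ without secretly depending on the chosen representative $P$ in a way that would break the quantifier order in Definition \ref{def topologizability etc}. Tracking this is routine once one observes that $P$ may be fixed first, after which $Q$ and $(\gamma_m)_{m\in\N}$ depend only on $P$.
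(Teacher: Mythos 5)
Your proposal is correct and matches the paper exactly: the theorem is obtained by combining Proposition \ref{stable sufficient for distributions} (together with the remark preceding it that topologizability passes to invariant subspaces with the relative topology, which is your seminorm-restriction argument) for ii) $\Rightarrow$ i), and Lemma \ref{stable necessary distributions} for i) $\Rightarrow$ ii). No further argument is needed.
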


By Remark \ref{sufficient a) and b) distributions}, the conditions a) and b) in the above theorem are satisfied for $\mathscr{G}=\mathscr{D}'$ and $\mathscr{G}=\mathscr{D}'_P$ for certain $P$ while condition c) is fulfilled whenever $\left\{x\in X;\, w(x)\neq 0\right\}$ is dense in $X$. In particular, we have the following.

\begin{corollary}\label{characterization topologizability all distributions}
	Let $X\subseteq\R^d$ be open, $w\in C^\infty(X)$, $\psi:X\rightarrow X$ smooth and injective such that $\det J\psi(x)\neq 0$ for all $x\in X$. Moreover, assume that $\left\{x\in X;\, w(x)\neq 0\right\}$ is dense in $X$. Then, the following are equivalent.
	\begin{enumerate}
		\item[i)] The weighted composition operator $C_{w,\psi}$ is topologizable on $\mathscr{D}'(X)$.
		\item[ii)] $\psi$ has stable orbits.
	\end{enumerate}
\end{corollary}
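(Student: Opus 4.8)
The plan is to deduce this corollary directly from Theorem \ref{characterization topologizability distributions} by specializing to the sheaf $\mathscr{G}=\mathscr{D}'$. The standing assumptions on $w$ and $\psi$ in the corollary are exactly those of the theorem, and $C_{w,\psi}(\mathscr{D}'(X))\subseteq\mathscr{D}'(X)$ holds trivially because $C_{w,\psi}$ is by construction (Definition \ref{weighted composition operator}) a continuous operator on all of $\mathscr{D}'(X)$. It therefore remains only to verify that hypotheses a), b), and c) of the theorem hold for $\mathscr{G}=\mathscr{D}'$; once this is done, the equivalence of i) and ii) is immediate. All three verifications are indicated in Remark \ref{sufficient a) and b) distributions}, and I would make them explicit.

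For hypothesis a) I would fix an arbitrary open, relatively compact exhaustion $(X_n)_{n\in\N}$ of $X$ and argue that every restriction $r_X^Y:\mathscr{D}'(X)\rightarrow\mathscr{D}'(Y)$ with $Y=X_n\cup B(x,\varepsilon)$ has dense range. The key point is that the compactly supported distributions are strongly dense: given $u\in\mathscr{D}'(X)$ and a bounded $B\subseteq\mathscr{D}(X)$, one has $B\subseteq\mathscr{D}(K)$ (bounded) for some compact $K\subseteq X$, and choosing $\varphi\in\mathscr{D}(X)$ with $\varphi\equiv 1$ near $K$ gives $\varphi u\in\mathscr{E}'(\R^d)$ with $\langle \varphi u-u,\phi\rangle=0$ for all $\phi\in B$. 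Hence $\mathscr{E}'(\R^d)$ is dense in $\mathscr{D}'(X)$, and the same argument applies on $Y$. Since every compactly supported distribution on $Y$ extends by zero to a compactly supported distribution on $X$, the image of $r_X^Y$ contains $\mathscr{E}'(Y)$, which is dense in $\mathscr{D}'(Y)$, giving a).

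For hypotheses b) and c) the verifications are short. Since $\mathscr{D}'$ is invariant under translations, for b) it suffices by the translation argument of Remark \ref{sufficient a) and b) distributions} to produce, for each sufficiently small $\varepsilon$, a pair $\chi\in\mathscr{D}(B(0,\varepsilon))$ and $h\in\mathscr{D}'(B(0,\varepsilon))$ with $\langle h,\chi\rangle\neq 0$; taking any nonzero $\chi$ together with $h=\chi$, or $h=\delta_0$ with $\chi(0)\neq 0$, works. For c) I would use that, since $\det J\psi\neq 0$, the inverse function theorem makes $\psi$ locally a diffeomorphism, in particular locally open; combined with the density of $\{x\in X;\,w(x)\neq 0\}$, the cited \cite[Proposition 3.9]{Kalmes19-2} yields that $\{x\in X;\,w(\psi^l(x))\neq 0\}$ is dense for every $l\in\N_0$, which is precisely c).

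With a), b), c) in place, Theorem \ref{characterization topologizability distributions} applied to $\mathscr{G}=\mathscr{D}'$ gives the claimed equivalence. I do not expect a genuine obstacle here, as the corollary is a specialization of an already-proved theorem; the only step requiring a direct argument rather than a citation is the density assertion in a), and even there the standard cut-off together with extension-by-zero of compactly supported distributions suffices.
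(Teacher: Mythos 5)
Your proposal is correct and follows essentially the same route as the paper: the corollary is obtained by applying Theorem \ref{characterization topologizability distributions} with $\mathscr{G}=\mathscr{D}'$, with hypotheses a), b), c) verified exactly as in Remark \ref{sufficient a) and b) distributions} (density of $\mathscr{E}'$ via cut-off functions for a), translation invariance plus a trivial pairing for b), and \cite[Proposition 3.9]{Kalmes19-2} together with local openness of $\psi$ for c)). Your write-up merely makes explicit what the paper leaves to the remark, so there is nothing to add.
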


Now, we turn our attention to power boundedness. For a smooth and injective $\psi:X\rightarrow X$ with $\det J\psi(x)\neq 0$ for all $x\in X$ it follows that $\psi^m(X)$ is an open subset of $\R^d$ and $\psi^m:X\rightarrow\psi^m(X)$ is a diffeomorphism for every $m\in\N$. In particular, $\left(\psi^m\right)^{-1}:\psi^m(X)\rightarrow\R^d$ is a smooth function. Its components are denoted by $\left(\psi^m\right)^{-1}_c, 1\leq c\leq d$. For $Y\subseteq\R^d$ open, $K\subseteq Y$ compact, $n\in\N_0$, and $f\in C^\infty(Y)$ we define $\|f\|_{n,K}:=\sup_{|\alpha|\leq n, x\in K}|\partial^{\alpha} f(x)|$. Thus, $\|\cdot\|_{n,K}$ is a seminorm on $C^\infty(Y)$ and the standard topology on $C^\infty(Y)$ is the one generated by the set of seminorms $\left\{\|\cdot\|_{n,K};\,n\in\N_0, K\subseteq Y\mbox{ compact}\right\}$.

\begin{theorem}\label{some results on power boundedness}
	Let $X\subseteq\R^d$ be open, let $w\in C^\infty(X)$ be such that the set $\left\{x\in X;\,w(x)\neq 0\right\}$ is dense in $X$, and let $\psi:X\rightarrow X$ be smooth and injective with $\det J\psi(x)\neq 0$ for all $x\in X$. Then, among the following, i) implies ii) and ii) implies iii).
	\begin{enumerate}
		\item[i)] $\psi$ has stable orbits and for every compact set $K\subseteq X$ we have
		\begin{equation}\label{similar to bounded}
		\sup_{m\in\N}\left\|\left(\prod_{j=0}^{m-1}\frac{w(\psi^j(\cdot))}{|\det J\psi(\psi^j(\cdot))|}\right)\circ\left(\psi^m\right)^{-1}\right\|_{n,\psi^m(K)}<\infty,
		\end{equation}
		and
		\begin{equation}\label{is this also necessary}
		\forall\,1\leq c\leq d:\,\sup_{m\in\N}\left\|\left(\psi^m\right)_c^{-1}\right\|_{n,\psi^m(K)}<\infty.
		\end{equation}
		\item[ii)] $C_{w,\psi}$ is power bounded on $\mathscr{D}'(X)$.
		\item[iii)] $\psi$ has stable orbits and for every compact set $K\subseteq X$ (\ref{similar to bounded}) holds.
	\end{enumerate}
\end{theorem}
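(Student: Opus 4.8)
My plan is to transport everything to the predual $\mathscr{D}(X)$ via the transpose and then read off the two implications from the chain and Leibniz rules together with the topologizability characterization already at hand. Since $\mathscr{D}'(X)$ carries the strong topology, a fundamental system of continuous seminorms is given by $p_B(u)=\sup_{\varphi\in B}|\langle u,\varphi\rangle|$ with $B\subseteq\mathscr{D}(X)$ bounded. Writing $\omega_m:=\prod_{j=0}^{m-1}\frac{w(\psi^j(\cdot))}{|\det J\psi(\psi^j(\cdot))|}$ and using the formula for $C_{w,\psi}^m$ displayed after Definition \ref{weighted composition operator}, the transpose iterate is $(C_{w,\psi}^t)^m\varphi=(\omega_m\varphi)\circ(\psi^m)^{-1}$, a test function supported in $\psi^m(\supp\varphi)$, so that $p_B(C_{w,\psi}^m u)=p_{(C_{w,\psi}^t)^m(B)}(u)$. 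Using the Bipolar Theorem exactly as in the proof of Lemma \ref{stable necessary distributions} together with the monotonicity of the support functionals $p_{(\cdot)}$, one sees that $C_{w,\psi}$ is power bounded on $\mathscr{D}'(X)$ if and only if for every bounded $B\subseteq\mathscr{D}(X)$ the set $\bigcup_{m\in\N_0}(C_{w,\psi}^t)^m(B)$ is bounded in $\mathscr{D}(X)$; and a subset of $\mathscr{D}(X)$ is bounded precisely when its members share a common compact support $L$ and satisfy $\sup\|\cdot\|_{n,L}<\infty$ for every $n\in\N_0$.

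To prove that ii) implies iii), note first that power bounded operators are topologizable, so, since $\{x\in X;\,w(x)\neq 0\}$ is dense, Corollary \ref{characterization topologizability all distributions} yields that $\psi$ has stable orbits. For (\ref{similar to bounded}) fix a compact $K\subseteq X$ and choose $\varphi\in\mathscr{D}(X)$ with $\varphi\equiv 1$ on an open neighbourhood $V$ of $K$. The singleton $\{\varphi\}$ is bounded, so by the reformulation $\bigcup_m(C_{w,\psi}^t)^m(\{\varphi\})$ is bounded in $\mathscr{D}(X)$, hence there is a compact $L$ with $\sup_m\|(\omega_m\varphi)\circ(\psi^m)^{-1}\|_{n,L}<\infty$ for every $n$. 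On the open set $\psi^m(V)\supseteq\psi^m(K)$ we have $\varphi\circ(\psi^m)^{-1}\equiv 1$, whence $(\omega_m\varphi)\circ(\psi^m)^{-1}=\omega_m\circ(\psi^m)^{-1}$ there, together with all derivatives; restricting the supremum to $\psi^m(K)\subseteq L$ gives exactly (\ref{similar to bounded}).

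To prove that i) implies ii), let $B\subseteq\mathscr{D}(X)$ be bounded, say $B\subseteq\mathscr{D}(K)$ with $\sup_{\varphi\in B}\|\varphi\|_{n,K}<\infty$ for all $n$. Stable orbits furnish a compact $L$ with $\psi^m(K)\subseteq L$ for all $m$, providing the common compact support for all elements of $\bigcup_m(C_{w,\psi}^t)^m(B)$. For the uniform seminorm bounds the decisive step is to split the product after composition, $(\omega_m\varphi)\circ(\psi^m)^{-1}=\big(\omega_m\circ(\psi^m)^{-1}\big)\cdot\big(\varphi\circ(\psi^m)^{-1}\big)$, and to estimate by the Leibniz rule. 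The first factor is controlled uniformly in $m$ directly by (\ref{similar to bounded}); for the second factor the higher-order chain rule (Fa\`a di Bruno's formula) bounds $\|\varphi\circ(\psi^m)^{-1}\|_{n,\psi^m(K)}$ by a universal polynomial in $\|\varphi\|_{n,K}$ and the quantities $\|(\psi^m)^{-1}_c\|_{n,\psi^m(K)}$, the latter being uniformly bounded in $m$ by (\ref{is this also necessary}). Taking suprema over $m$ and $\varphi\in B$ shows that $\bigcup_m(C_{w,\psi}^t)^m(B)$ is bounded, so $C_{w,\psi}$ is power bounded.

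The main obstacle lies in this last implication: the hypotheses control $\omega_m$ only \emph{after} composition with $(\psi^m)^{-1}$, i.e.\ through (\ref{similar to bounded}), and never $\omega_m$ itself. A naive application of the chain rule to $(\omega_m\varphi)\circ(\psi^m)^{-1}$ would require bounds on $\|\omega_m\varphi\|_{n,K}$, and hence on $\|\omega_m\|_{n,K}$, which are unavailable and would implicitly demand control of the forward derivatives of $\psi^m$. Splitting the product after composition, so that (\ref{similar to bounded}) applies verbatim to the weight factor while the chain rule is invoked only for the harmless factor $\varphi\circ(\psi^m)^{-1}$ (whose derivatives are governed by (\ref{is this also necessary})), is precisely what makes the argument go through and explains why (\ref{is this also necessary}) is needed for the sufficiency in i) but not for the necessity recorded in iii).
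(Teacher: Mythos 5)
Your proof is correct and follows essentially the same route as the paper: both hinge on splitting the transposed test function \emph{after} composition as $\big(\omega_m\circ(\psi^m)^{-1}\big)\cdot\big(\varphi\circ(\psi^m)^{-1}\big)$, controlling the first factor by (\ref{similar to bounded}) and the second by the higher-order chain rule together with (\ref{is this also necessary}), and both obtain stable orbits in ii)$\Rightarrow$iii) from topologizability and then extract (\ref{similar to bounded}) via a cutoff $\varphi\equiv 1$ near $K$. The only (harmless) difference is in the packaging: you prove equicontinuity directly by showing $\bigcup_m(C_{w,\psi}^t)^m(B)$ is bounded in $\mathscr{D}(X)$, whereas the paper establishes only pointwise orbit boundedness of $C_{w,\psi}^m u$ and invokes barrelledness of $\mathscr{D}'(X)$, and conversely uses weak boundedness plus Mackey's theorem where you use your bipolar reformulation.
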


\begin{proof}
	Assume that i) is valid. For a compact subset $K\subseteq X$ let $L(K)$ be a compact subset of $X$ satisfying $\psi^m(K)\subseteq L(K)$ for all $m\in\N$. Let $B\subseteq\mathscr{D}(X)$ be bounded and let $K\subseteq X$ be compact such that $B\subseteq\mathscr{D}(K)$ is bounded. 
	
	For fixed $u\in\mathscr{D}'(X)$ there are $r\in\N_0$ and $M_1>0$ such that
	$$\forall\,\phi\in\mathscr{D}(L(K)):\,|\langle u,\phi\rangle|\leq M_1\|\phi\|_{r,L(K)}.$$
	Moreover, as $B\subseteq\mathscr{D}(K)$ is bounded, there is $M_2>0$ with
	$$\forall\,\varphi\in B:\,\|\varphi\|_{r,K}\leq M_2.$$
	Because i) holds, there is $M_3>0$ satisfying
	$$\sup_{m\in\N}\left\|\left(\prod_{j=0}^{m-1}\frac{w(\psi^j(\cdot))}{|\det J\psi(\psi^j(\cdot))|}\right)\circ(\psi^m)^{-1}\right\|_{n,\psi^m(K)}<M_3.$$
	Since for smooth functions $f$ and $g$
	$$\|f g\|_{r,\psi^m(K)}\leq 2^r\|f\|_{r,\psi^m(K)} \|g\|_{r,\psi^m(K)}$$
	it follows for $\varphi\in B\subseteq\mathscr{D}(B)$ with \cite[Proposition 3.10 ii)]{Kalmes19-2} applied to $(\psi^m)^{-1}$ in place of $\psi$ and $m=1$ in the context of the cited proposition
	\allowdisplaybreaks 
	\begin{align*}
	|\langle C_{w,\psi}^m(u),\varphi\rangle|&=\left|\left\langle u, \left(\varphi\prod_{j=0}^{m-1}\frac{w(\psi^j(\cdot))}{|\det J\psi(\psi^j(\cdot))|}\right)\circ\left(\psi^m\right)^{-1}\right\rangle\right|\\
	&\leq M_1\left\|\left(\varphi\prod_{j=0}^{m-1}\frac{w(\psi^j(\cdot))}{|\det J\psi(\psi^j(\cdot))|}\right)\circ\left(\psi^m\right)^{-1}\right\|_{r,L(K)}\\
	&=M_1\left\|\left(\varphi\prod_{j=0}^{m-1}\frac{w(\psi^j(\cdot))}{|\det J\psi(\psi^j(\cdot))|}\right)\circ\left(\psi^m\right)^{-1}\right\|_{r,\psi^m(K)}\\
	&=M_1\left\|\left(\prod_{j=0}^{m-1}\frac{w(\psi^j(\cdot))}{|\det J\psi(\psi^j(\cdot))|}\circ\left(\psi^m\right)^{-1}\right)\;\varphi\circ\left(\psi^m\right)^{-1}\right\|_{r,\psi^m(K)}\\
	&\leq 2^r M_1\left\|\prod_{j=0}^{m-1}\frac{w(\psi^j(\cdot))}{|\det J\psi(\psi^j(\cdot))|}\circ\left(\psi^m\right)^{-1}\right\|_{r,\psi^m(K)}\times\\
	&\;\;\;\;\;\;\;\;\;\times\left\|\varphi\circ\left(\psi^m\right)^{-1}\right\|_{r,\psi^m(K)}\\
	&\leq 2^r M_1 M_3 M_4\;\left\|\varphi\right\|_{r,K} \left(1+\max_{1\leq c\leq d}\left\|\left(\psi^m\right)_c^{-1}\right\|_{r,\psi^m(K)}\right)^r\\
	&\leq 2^r M_1 M_3 M_4\, M_2 \left(1+\max_{1\leq c\leq d}\left\|\left(\psi^m\right)_c^{-1}\right\|_{n,\psi^m(K)}\right)^r
	\end{align*}
	for a suitable constant $M_4$ which is independent of $\varphi$. Because of i) we thus obtain
	\begin{equation}\label{pointwise bounded}
	\forall\,u\in\mathscr{D}'(X):\,\sup_{m\in\N}\sup\left\{\left|\left\langle C_{w,\psi}^m(u),\varphi\right\rangle\right|;\,\varphi\in B\right\}<\infty.
	\end{equation}
	As the strong dual of the complete Schwartz space $\mathscr{D}(X)$, $\mathscr{D}'(X)$ is ultrabornological (see e.g.\ \cite[Proposition 24.23]{MeVo1997}), hence barrelled so that (\ref{pointwise bounded}) implies ii).
	
	Assume that ii) holds. Then $C_{w,\psi}$ is topologizable and from Remark \ref{sufficient a) and b) distributions} and Theorem \ref{characterization topologizability distributions} it follows that $\psi$ has stable orbits. As before, for $K\subseteq X$ compact we denote by $L(K)$ a compact subset of $X$ for which $\psi^m(K)\subseteq L(K)$ holds for every $m\in\N$.
	
	We choose $\varphi\in\mathscr{D}(X)$ with $\varphi=1$ in a neighborhood of $K$. Then
	\begin{align*}
	\forall\,m\in\N:\;\left(\varphi \prod_{j=0}^{m-1}\frac{w(\psi^j(\cdot))}{|\det J\psi(\psi^j(\cdot))|}\right)\circ\left(\psi^m\right)^{-1}&\in\mathscr{D}\left(\psi^m(\supp\varphi)\right)\\
	&\subseteq\mathscr{D}\left(L(\supp\varphi)\right)
	\end{align*}
	and because for arbitrary $u\in\mathscr{D}'(X)$
	$$\forall\, m\in\N:\,\left\langle C_{w,\psi}^m(u),\varphi\right\rangle=\left\langle u,\left(\varphi\prod_{j=0}^{m-1}\frac{w(\psi^j(\cdot))}{|\det J\psi(\psi^j(\cdot))|}\right)\circ\left(\psi^m\right)^{-1}\right\rangle$$
	it follows from the power boundedness of $C_{w,\psi}$ that
	$$\left\{\left(\varphi\prod_{j=0}^{m-1}\frac{w(\psi^j(\cdot))}{|\det J\psi(\psi^j(\cdot))|}\right)\circ\left(\psi^m\right)^{-1};\,m\in\N\right\}$$
	is weakly bounded in $\mathscr{D}(X)$ and therefore, by Mackey's Theorem (see e.g.\ \cite[Theorem 23.15]{MeVo1997}), bounded in $\mathscr{D}(X)$. By the choice of $\varphi$ we have
	\begin{align*}
	&\left(\varphi\prod_{j=0}^{m-1}\frac{w(\psi^j(\cdot))}{|\det J\psi(\psi^j(\cdot))|}\right)\circ\left(\psi^m\right)^{-1}|_{\psi^m(K)}\\
	=&\left(\prod_{j=0}^{m-1}\frac{w(\psi^j(\cdot))}{|\det J\psi(\psi^j(\cdot))|}\right)\circ\left(\psi^m\right)^{-1}|_{\psi^m(K)}
	\end{align*}
	for all $m\in\N$ so that (\ref{similar to bounded}) follows. Thus, ii) implies iii).
\end{proof}

\begin{remark}\label{power boundedness on test functions}
	For a diffeomorphism $\psi:X\rightarrow X$ it is straightforward to calculate that the transpose of $C_{w,\psi}^m$ on $\mathscr{D}'(X)$ is given by the restriction of $C_{w_\psi,\psi^{-1}}^m$ to $\mathscr{D}(X)$, where
	$$w_\psi:X\rightarrow\C, w_\psi(x)=\frac{w}{|\det J\psi|}\circ\psi^{-1}.$$
	Since $\mathscr{D}'(X)$ is the strong dual of the complete Schwartz space $\mathscr{D}(X)$ it follows that $\mathscr{D}'(X)$ is ultrabornological (see e.g.\ \cite[Proposition 24.23]{MeVo1997}), hence barrelled.  Thus, $C_{w,\psi}$ is power bounded if and only if $\left\{C_{w,\psi}^m(u);\,m\in\N_0\right\}$ is bounded in $\mathscr{D}'(X)$ for every $u\in\mathscr{D}'(X)$. Because $\mathscr{D}(X)$ is reflexive, by Mackey's Theorem (\cite[Theorem 23.15]{MeVo1997}) it follows that the latter is equivalent to the boundedness of $\left\{\left\langle C_{w,\psi}^m(u),\varphi\right\rangle,\,m\in\N_0\right\}$ for all $u\in\mathscr{D}'(X), \varphi\in\mathscr{D}(X)$. Applying Mackey's Theorem once more, this in turn is equivalent to $\left\{C_{w_\psi,\psi^{-1}}^m(\varphi);\,m\in\N_0\right\}$ being bounded in $\mathscr{D}(X)$. From the barrelledness of $\mathscr{D}(X)$ it finally follows that this is equivalent to $C_{w_\psi,\psi^{-1}}$ being power bounded on $\mathscr{D}(X)$.
	
	As usual, for bijective $\psi$ we write $\psi^{-m}$ instead of $(\psi^{-1})^m, m\in\N$:
\end{remark}

\begin{corollary}
	Let $X\subseteq\R^d$ be open and $\psi:X\rightarrow X$ be smooth and bijective. Then the following are equivalent:
	\begin{enumerate}
		\item[i)] The composition operator $C_\psi$ is power bounded on $\mathscr{D}'(X)$.
		\item[ii)] $\psi$ has stable orbits and for every compact subset $K\subseteq X$ we have
		\begin{equation*}
		\forall\,1\leq c\leq d:\,\sup_{m\in\N}\left\|\left(\psi^{-m}\right)_c\right\|_{n,\psi^m(K)}<\infty.
		\end{equation*}
	\end{enumerate}
\end{corollary}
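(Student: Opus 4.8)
The plan is to deduce the Corollary from Theorem \ref{some results on power boundedness} and Remark \ref{power boundedness on test functions}, after specializing $w=1$. The chain rule gives $\prod_{j=0}^{m-1}|\det J\psi(\psi^j(\cdot))|^{-1}=|\det J\psi^m(\cdot)|^{-1}$, whence $\big(\prod_{j=0}^{m-1}\tfrac{w(\psi^j(\cdot))}{|\det J\psi(\psi^j(\cdot))|}\big)\circ(\psi^m)^{-1}=|\det J\psi^{-m}|$; thus for $C_\psi$ condition (\ref{similar to bounded}) reads $\sup_m\||\det J\psi^{-m}|\|_{n,\psi^m(K)}<\infty$, while condition (\ref{is this also necessary}) is literally condition ii) of the Corollary (recall $(\psi^m)^{-1}_c=(\psi^{-m})_c$). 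The two implications of the Theorem then cover everything except that, for necessity, the Theorem only delivers (\ref{similar to bounded}) and we must upgrade this to (\ref{is this also necessary}).

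For the implication ii)$\Rightarrow$i) I would observe that $\det J\psi^{-m}$ is a fixed universal polynomial in the first order partials $\partial_k(\psi^{-m})_c$, so that $\|\det J\psi^{-m}\|_{n,\psi^m(K)}$ is dominated by a polynomial in the quantities $\|(\psi^{-m})_c\|_{n+1,\psi^m(K)}$; since $\det J\psi^{-m}$ does not vanish, $|\det J\psi^{-m}|$ is smooth with the same seminorm bounds. Hence condition ii), assumed for every order, already forces (\ref{similar to bounded}). Together with stable orbits this is precisely hypothesis i) of Theorem \ref{some results on power boundedness}, and i)$\Rightarrow$ii) there yields power boundedness of $C_\psi$ on $\mathscr{D}'(X)$.

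For i)$\Rightarrow$ii), stable orbits and (\ref{similar to bounded}) are provided by Remark \ref{sufficient a) and b) distributions} and the implication ii)$\Rightarrow$iii) of Theorem \ref{some results on power boundedness}. To extract the finer bound (\ref{is this also necessary}) I would pass to the transpose picture of Remark \ref{power boundedness on test functions}: power boundedness of $C_\psi$ on $\mathscr{D}'(X)$ is equivalent to equicontinuity on $\mathscr{D}(X)$ of the family $\varphi\mapsto(\varphi\circ\psi^{-m})\,|\det J\psi^{-m}|$. Fixing a compact $K$, an index $c$ and $\eta_c\in\mathscr{D}(X)$ with $\eta_c(z)=z_c$ on a neighbourhood $N\supseteq K$, one has $(\eta_c\circ\psi^{-m})\,|\det J\psi^{-m}|=(\psi^{-m})_c\,|\det J\psi^{-m}|$ on the open set $\psi^m(N)\supseteq\psi^m(K)$, with all derivatives agreeing there since $\eta_c$ is affine on $N$. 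As the orbit of the fixed function $\eta_c$ is bounded in $\mathscr{D}(X)$ and, by stable orbits, its supports lie in a fixed compact set, it is bounded in the corresponding $\mathscr{D}(L)$, which gives
\begin{equation*}
\sup_{m\in\N}\big\|(\psi^{-m})_c\,|\det J\psi^{-m}|\big\|_{n,\psi^m(K)}<\infty\qquad(1\le c\le d,\ n\in\N_0).
\end{equation*}

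The main obstacle is then to divide out the Jacobian density, i.e.\ to pass from the product $(\psi^{-m})_c\,|\det J\psi^{-m}|$ being $C^n$-bounded to $(\psi^{-m})_c$ itself being $C^n$-bounded. Given the uniform upper bounds on $|\det J\psi^{-m}|$ and its derivatives from (\ref{similar to bounded}), this reduction succeeds as soon as one secures a uniform positive lower bound $\inf_m\inf_{\psi^m(K)}|\det J\psi^{-m}|>0$, equivalently $\sup_m\sup_K|\det J\psi^m|<\infty$, for then $1/|\det J\psi^{-m}|$ is $C^n$-bounded and the Leibniz rule finishes the argument. Securing this lower bound is the crux, and it is a genuinely multidimensional point: for $d=1$ the Jacobian coincides with the derivative and (\ref{is this also necessary}) collapses to (\ref{similar to bounded}), so nothing is to be done. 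I expect to obtain it from the change of variables identity $\int_{\psi^m(K)}|\det J\psi^{-m}|\,dy=\int_K\,dz$, which is uniform in $m$ and already uses bijectivity, combined with the uniform $C^n$-control of $|\det J\psi^{-m}|$ from (\ref{similar to bounded}); alternatively, feeding the $m$-dependent test functions $z_c\,|\det J\psi^m|\,\eta$ into the equicontinuity estimate produces $(\psi^{-m})_c$ on $\psi^m(N)$ directly and reduces the problem to the same uniform bound $\sup_m\||\det J\psi^m|\|_{r,\supp\eta}<\infty$.
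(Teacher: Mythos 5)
Your direction ii)$\Rightarrow$i) is correct and coincides with the paper's argument: every $\partial^\alpha\det J\psi^{-m}$ is a universal polynomial in the $\partial^\beta(\psi^{-m})_c$, $1\le|\beta|\le|\alpha|+1$, so condition ii), assumed for all orders, yields (\ref{similar to bounded}); together with stable orbits this is hypothesis i) of Theorem \ref{some results on power boundedness}. The problem is the converse. There you carry out the transpose computation literally: with $w=1$ the Remark gives $w_\psi=\frac{1}{|\det J\psi|}\circ\psi^{-1}=|\det J\psi^{-1}|$, the transpose orbit of $\eta_c$ is $(\eta_c\circ\psi^{-m})\,|\det J\psi^{-m}|$, and what you actually obtain is $\sup_m\|(\psi^{-m})_c\,|\det J\psi^{-m}|\,\|_{n,\psi^m(K)}<\infty$. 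To divide out the Jacobian you need $\inf_m\inf_{\psi^m(K)}|\det J\psi^{-m}|>0$, and this you do not prove. Neither of your two suggestions closes the gap: the identity $\int_{\psi^m(K)}|\det J\psi^{-m}|\,dy=\int_K dz$ together with $\psi^m(K)\subseteq L$ only yields $\sup_{\psi^m(K)}|\det J\psi^{-m}|\ge\lambda(K)/\lambda(L)$, i.e.\ a lower bound on the \emph{supremum}, and a uniform $C^1$ bound does not transfer this to the infimum, because $|\det J\psi^{-m}|$ may still come arbitrarily close to $0$ at some points of $\psi^m(K)$ while being of order one elsewhere (the sets $\psi^m(K)$ have no uniformly controlled geometry). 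Your alternative --- testing against $z_c|\det J\psi^m(z)|\eta(z)$ --- presupposes $\sup_m\||\det J\psi^m|\|_{r,\supp\eta}<\infty$, which is exactly the bound being sought. So i)$\Rightarrow$ii) is not established in your write-up.

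You should know that the paper's own proof takes a shorter route at precisely this point: it invokes Remark \ref{power boundedness on test functions} ``observing that $w_\psi=1$'', so that the transpose orbit of $\varphi_c(x)=x_c\phi(x)$ is read as $\varphi_c\circ\psi^{-m}=(\psi^{-m})_c$ near $\psi^m(K)$ with no Jacobian factor, and ii) follows at once. Taken literally with the formula $w_\psi=\frac{w}{|\det J\psi|}\circ\psi^{-1}$ stated in that Remark, the identity $w_\psi=1$ holds only when $|\det J\psi|\equiv 1$; for a general bijective $\psi$ the factor $|\det J\psi^{-m}|$ is present exactly as in your computation. In other words, the obstruction you isolate is genuine and is the point the paper's proof passes over; your proposal makes the difficulty visible but does not resolve it, so as it stands the equivalence is only proved modulo the uniform lower bound on $|\det J\psi^{-m}|$ along orbits (or, equivalently, modulo restricting to measure-preserving $\psi$, where your argument and the paper's agree and are complete).
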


\begin{proof}
	Assuming that $C_\psi$ is power bounded on $\mathscr{D}'(X)$ it follows from Theorem \ref{some results on power boundedness} that $\psi$ has stable orbits. Thus, given $K\subseteq X$ compact we can choose $L\subseteq X$ compact such that $\psi^m(K)\subseteq L$ for every $m\in\N$. Additionally, we choose $\phi\in\mathscr{D}(X)$ with $\phi=1$ in a neighborhood of $L$. For $1\leq c\leq d$ we define $\varphi_c(x)=x_c\phi(x)$ so that $\varphi_c\in\mathscr{D}(X)$. Since $C_\psi$ is power bounded on $\mathscr{D}'(X)$ it follows from Remark \ref{power boundedness on test functions} together with $w_\psi=1$ that $\left\{C^m_{\psi^{-1}}(\varphi_c);\,m\in\N\right\}$ is a bounded subset of $\mathscr{D}(X)$. In particular, taking into account that $C_{\psi^{-1}}^m(\varphi_c)=\varphi_c\circ\psi^{-m}=(\psi^{-m})_c$ in a neighborhood of $K$, we obtain
	\begin{eqnarray*}
		\forall\,n\in\N:\,\infty&>&\sup_{m\in\N}\left\|C_{\psi^{-1}}^m(\varphi_c)\right\|_{n,L}\geq\sup_{m\in\N}\left\|C_{\psi^{-1}}^m(\varphi_c)\right\|_{n,\psi^m(K)}\\
		&= &\sup_{m\in\N}\left\|(\psi^{-m})_c\right\|_{n,\psi^{m}(K)},
	\end{eqnarray*}
	so that ii) follows.
	
	If on the other hand ii) holds, it follows from
	\[\det J\psi^{-m}=\det\left(\prod_{j=1}^m\left(J\psi\right)^{-1}(\psi^{-j}(\cdot))\right)=\left(\prod_{j=0}^{m-1}\frac{1}{\det J\psi(\psi^j(\cdot))}\right)\circ(\psi^m)^{-1}\]
	and the fact that for fixed $m\in\N$ and for every multi-index $\alpha\in\N_0^d$ the function $\partial^\alpha\det J\psi^{-m}$ is a polynomial in $\partial^\beta\left(\psi^{-m}\right)_c, 1\leq c\leq d, 1\leq|\beta|\leq |\alpha|+1,$ with integer coefficients independent of $m$ that for arbitrary compact subset $K\subseteq X$
	\[\sup_{m\in\N}\left\|\left(\prod_{j=0}^{m-1}\frac{w(\psi^j(\cdot))}{|\det J\psi(\psi^j(\cdot))|}\right)\circ(\psi^m)^{-1}\right\|_{n,\psi^m(K)}<\infty.\]
	Thus, ii) implies condition i) of Theorem \ref{some results on power boundedness} and thus the power boundedness of $C_\psi$ on $\mathscr{D}'(X)$. 
\end{proof}

\bibliographystyle{plain}
\bibliography{../../../../bib_Thomas}

\end{document}